\newtheorem{theorem}{Theorem}
\newtheorem{definition}[theorem]{Definition}
\newtheorem{proposition}[theorem]{Proposition}
\newtheorem{remark}[theorem]{Remark}
\newcommand{\E}{\mathbb E}
\newcommand{\F}{\mathcal F}
\renewcommand{\P}{\mathbb P}
\newcommand\cA{\mathcal{A}}
\newcommand\cD{\mathcal{D}}
\newcommand\cF{\mathcal{F}}
\newcommand\cL{\mathcal{L}}
\newcommand\bE{\mathbb{E}}
\newcommand\bF{\mathbb{F}}
\newcommand\bP{\mathbb{P}}
\newcommand{\ind}{\mathbbm{1}}
\newcommand{\kom}[1]{}
\renewcommand{\kom}[1]{{\bf [#1]}}
\newcounter{komcounter}
\numberwithin{komcounter}{section}
\title{The de Finetti problem with unknown competition}
\author{Erik Ekström\\\textit{Department of Mathematics, Uppsala University}\\ \\
Alessandro Milazzo\\\textit{Department of Mathematics, Uppsala University}\\\\
Marcus Olofsson\\\textit{Department of Mathematics and Mathematical Statistics, Umeå University}
}
\begin{document}

\maketitle

\begin{abstract}
	We consider a resource extraction problem which extends the classical de Finetti problem for a Wiener process to include 
	the case when a competitor, who is equipped with the possibility to extract all the remaining resources in one piece, may exist;
	we interpret this unknown competition as the agent being subject to possible fraud. 
	This situation is modelled as a controller-and-stopper non-zero-sum stochastic game with incomplete information.
	In order to allow the fraudster to hide his existence, we consider strategies where his action time is randomised. Under these conditions, 
	we provide a Nash equilibrium which is fully described in terms of the corresponding single-player de Finetti problem. In this equilibrium, the agent and the fraudster use singular strategies in such a way that a two-dimensional process, which represents available resources and the filtering estimate of active competition, reflects in a specific direction along a given boundary.
\end{abstract}

\section{Introduction}\label{intro}

In the classical single-player de Finetti problem for a Wiener process, the value of a limited resource evolves, in the absence of extraction, as 
\[Y_t=x + \mu t + \sigma W_t,\]
where $\mu$ and $\sigma$ are { positive} constants and $W$ is a standard Brownian motion. The de Finetti problem
-- also known as \textit{the dividend problem} -- then consists of maximising 
\[\E\left[ \int_0^{\tau_0}e^{-rt}dD_t\right]\]
over all {adapted, non-decreasing, and right-continuous} processes $D$ with $D_{0-}=0$, where $\tau_0:=\inf\{t\geq 0:Y_t-D_t\leq 0\}$ is the extinction time (or bankruptcy time). It is well-known (see, e.g., Asmussen and Taksar \cite{AT} and Jeanblanc and Shiryaev \cite{jeanblanc1995optimization}) that
the optimal strategy $\tilde D$ is given by $\tilde D_t=\sup_{0\leq s\leq t}(Y_s-B)^+$, where {$(x)^+:=\max\{x,0\}$ and} $B$ is a constant that can be calculated explicitly.

In the current article, we study the de Finetti problem under the threat of {unknown competition. We interpret this unknown competition as the agent, who exerts the control $D$ to extract from the source $Y$, being subject to possible fraud.}  
More precisely, we include the possibility that a fraudster exists, with the capacity to
extract all the remaining resources at once at a random time $\gamma$.
We use a Bernoulli random variable $\theta$ to model whether the fraudster exists ($\theta=1$) or not ($\theta=0$) and we consider maximisation of 
\[\E\left[ \int_0^{\tau_0\wedge\hat\gamma}e^{-rt}dD_t\right]\]
over controls $D$ {as above} and 
where $\hat\gamma:=\gamma 1_{\{\theta=1\}}+\infty 1_{\{\theta=0\}}$. At the same time, the fraudster seeks to choose 
$\gamma$ to optimise the expected payoff
\[\E\left[e^{-r(\tau_0\wedge\gamma)} X ^D_{\tau_0\wedge\gamma}\right],\]
where $X^D=Y-D$ represents the remaining resources after extraction.

The above game is a controller-and-stopper non-zero-sum stochastic game and we extend the stream of literature on stochastic games of control and stopping: Karatzas and Sudderth \cite{karatzas2006stochastic} studied three stochastic games of classical control and stopping for a linear diffusion. Karatzas and Zamfirescu \cite{karatzas2008martingale} developed a martingale approach for studying zero-sum stochastic games combining classical controls and stopping in a non-Markovian framework. Bayraktar and Huang \cite{bayraktar2013multidimensional} studied multidimensional controller-and-stopper zero-sum stochastic games in finite horizon. Kwon and Zhang \cite{kwon2015game} investigated a stochastic game combining singular control and stopping. Hernandez-Hernandez et al.~\cite{hernandez2015zero} studied a zero-sum game between a singular stochastic controller and a discretionary stopper. Bovo et al.~\cite{bovo2022variational} applied PDE methods to study variational inequalities on unbounded domains for zero-sum games between a singular stochastic controller and a discretionary stopper in finite horizon. De Angelis and Ferrari \cite{de2018stochastic} established a connection between a class of two-player non-zero-sum games of optimal stopping and certain two-player non-zero-sum games of singular control.
	
In contrast to most of the literature on stochastic games of control and stopping, which studies zero-sum games, we formulate and solve a \textit{non-zero-sum} game. Moreover, a relevant feature that distinguishes our game from the works mentioned above is \textit{incomplete information}. In our framework, incomplete information 
stems from the fact that the existence of the fraudster is uncertain. 
Since the fraudster is equipped with a binary stopping 
control, inference about the existence of the fraudster is based on observations of the events $\{\hat\gamma\leq t\}$. In fact, the strategies that we consider are based on observations/calculations of the two-dimensional process $(X,\Pi)$, where 
$X=X^D=Y-D$ is observed and represents the value of resources after extraction, and $\Pi$ it calculated, corresponding to the adjusted belief of active competition,
i.e., the conditional probability that $\theta=1$ given that stopping has not yet occurred, see Section~\ref{adjbelief}.

Remarkably, this controller-and-stopper non-zero-sum game with incomplete information has an equilibrium 
which can be described explicitly. 
The equilibrium is derived using the Ansatz that the equilibrium value for the controller is $(1-p)V(x)$, where 
$p$ is the initial probability of active competition, and $V$ is the value in the single-player de Finetti problem.
In this equilibrium the controller extracts resources 
and the fraudster stops at a randomised stopping time, specified in terms of a generalised intensity, in such a way that 
the corresponding two-dimensional process $(X,\Pi)$ reflects obliquely at a given monotone boundary $x=b(p)$. To construct
this two-dimensional reflected process, including a carefully specified reflection direction, we use the notion of perturbed Brownian motion (see, e.g., Carmona et al.~\cite{CPY} and Perman and Werner \cite{PW}).

Our paper is the third in a series of papers investigating the role of uncertain competition in stochastic games. This strand of research was initiated by De Angelis and Ekstr\"{o}m \cite{EkstromDeAngelis} in which the term ``ghost'' was also introduced to represent the players that may not exist. In \cite{EkstromDeAngelis} an optimal stopping game in which both players are uncertain of the existence of the opponent was studied. Next, Ekstr\"{o}m et al.~\cite{ELO} proposed and studied a 
ghost game in a setting related to fraud detection and so called ``salami slicing'' fraudulence. As in the current paper, 
a controller-and-stopper non-zero-sum game of ghost type is studied in \cite{ELO}, but with the ``ghost" role inverted. More precisely, in \cite{ELO} the controller is a ghost whereas
in the current paper the stopper is a ghost.
{In \cite{EkstromDeAngelis}, the ghost has also a stopping control and a similar Ansatz as above was shown to hold, namely,}
an equilibrium with equilibrium value $(1-p)V$ is obtained, where again $p$ is the probability of competition and $V$ is 
the value in the corresponding single-player game. Similar observations can be made also in non-dynamic auction games with unknown competition, see Hirshleifer and Riley \cite[pages 386-389]{HR}. On the other hand, in the setting of \cite{ELO} with a ghost controller, such an Ansatz was not used, but instead an equilibrium was obtained using variational methods. In view of this, a rule-of-thumb seems to be that 
the equilibrium value in the case of a ghost game where the ghost is equipped with a {\em stopping} control is given 
by $(1-p)V$, where $V$ is the value in the corresponding single-player game.
A precise formulation and verification of such a claim remains to be found.

The paper is organized as follows. In Section~\ref{setup} we provide the precise game formulation of the {de Finetti problem under unknown competition}. In Section~\ref{background} we review the standard single-player {de Finetti problem} and we 
provide properties of its game version that should hold in equilibrium using heuristic arguments. Section~\ref{perturbed} uses the notion of perturbed Brownian motion to construct the candidate equilibrium. Our main result Theorem~\ref{thm:verification}, in which the candidate equilibrium is verified, is presented in Section~\ref{verification}.
Finally, Section~\ref{num} illustrates our findings with a numerical study.

\section{Problem set-up}\label{setup}
We begin by setting the mathematical stage necessary for our analysis. Throughout the paper, we let $(\Omega, \mathcal F, \mathbb P)$ be a {complete} probability space on which a standard Brownian motion $W$, a Bernoulli random variable $\theta$ with $\P( \theta=1) =1-\P(\theta=0)=p\in[0,1]$ and a Uniform-$(0,1)$ random variable $U$ are defined. Moreover, $W$, $\theta$ and $U$ are assumed to be independent.

We consider a stochastic game between Player 1 and Player 2 in which both players seek to maximise certain quantities to be specified below. 
Let $Y$ be a Brownian motion with drift given by
$$
Y_t = x+\mu t + \sigma W_t ,
$$
where the initial condition satisfies $x\geq 0$ and $\mu$ and $\sigma$ are given positive constants.
Denote by $\mathbb F^W=(\F^W_t)_{0\leq t<\infty}$ the augmentation of the filtration generated by the Brownian motion $W$;
this filtration will represent the information that Player 1 (the ``controller'') is equipped with.

\begin{definition}[Admissible controls for Player 1]
An admissible control for Player 1 is a non-decreasing, right-continuous, $\bF^W$-adapted processes
$D=(D_t)_{t\geq 0}$  satisfying $D_{0-}=0$ and $D_t\leq Y_{t}$ for every $t\in[0,\infty)$.
We denote by $\mathcal A_1$ the set of admissible controls for Player 1.
\end{definition}

{For any strategy $D\in\cA_1$, let $X=X^D:=Y-D$ and define
\begin{equation}\label{tauD}
\tau_0^X:=\inf\{t\geq 0: X_t\leq 0\}.
\end{equation}
To simplify the notation, we will often omit the superscript and simply write $X$ instead of $X^D$ and $\tau_0$ instead of $\tau_0^X$.}

In order to let Player~2 (the ``fraudster") hide his existence, he will be equipped with randomized stopping times. To define the strategies of
Player~2, we denote by $\mathcal D$ the Skorokhod space of cadlag paths on $[0,\infty)$.

\begin{definition}[Admissible controls for Player 2]
An admissible control $\Gamma=(\Gamma_t(X))_{t\geq 0}$ for Player~2 is a mapping $(t,X)\mapsto \Gamma_t(X)$
from $[0-,\infty)\times \mathcal D$ into $[0,1]$ which is progressively measurable for the canonical
filtration on $\mathcal D$, non-decreasing and right-continuous in $t$, and satisfying $\Gamma_{0-}(X)=0$.
We denote by $\mathcal A_2$ the set of admissible controls for Player~2.
\end{definition}

Given a pair of admissible strategies $(D,\Gamma)\in\mathcal A_1\times\mathcal A_2$, we define a 
randomized stopping time $\gamma$ as
\begin{equation}\label{gamma}
\gamma:=\gamma^{\Gamma}:=\inf \{t\geq 0 : \Gamma_t(X^D) > U\},
\end{equation}
where we recall that $U$ is a random variable which is $\mbox{Unif(0,1)}$-distributed 
and independent of $\theta$ and $W$. {In accordance with the notation for $X=X^D$, we will often omit the superscript and simply write $\gamma$ instead of $\gamma^{\Gamma}$}.

\begin{remark}
We note that 
Player 2 selects a universal map $\Gamma$ that he will apply to any given path of $X=Y-D$ to generate his randomized stopping time $\gamma=\gamma^{\Gamma}$ in \eqref{gamma}. In this way, Player~2 is equipped with feed-back controls, and we will obtain a Markovian game structure.
\end{remark}

Given a fixed discount rate $r >0$ and a pair $(D,\Gamma) \in \mathcal A_1 \times  \mathcal A_2$, we define the payoffs for Player~1 and Player 2 as 
\begin{equation} 
\label{eq:valuefunctions}
J_1(x,p,D,\Gamma):=\mathbb E \left [ \int_{0}^{\tau_0 \wedge \hat \gamma} e^{-rt} dD_t \right]  
\end{equation}
and
\begin{equation}
\label{eq:valuefunctions2}
J_2(x,p,D,\Gamma) := \mathbb E \left [ e^{-r (\tau_0\wedge\gamma)} X_{\tau_0\wedge\gamma}\right ], 
\end{equation}
respectively, where 
$\tau_0=\tau_0^X$ and $\gamma=\gamma^{\Gamma}$ are defined as in \eqref{tauD}-\eqref{gamma}, and
$$
\hat \gamma:= \begin{cases}
\gamma  & \mbox{if $\theta=1$} \\
\infty & \mbox{if $\theta=0$}.
\end{cases}
$$
The integral in \eqref{eq:valuefunctions} is interpreted in the {Lebesgue-Stieltjes} sense,
with 
\[\int_{0}^{\tau_0 \wedge \hat \gamma} e^{-rt} dD_t :=\int_{[0,\tau_0 \wedge \hat \gamma]} e^{-rt} dD_t .\]
The inclusion of the lower limit $0$ of integration thus accounts for the contribution to Player~1 from an
initial push $dD_0=D_0 > 0$.

Each player seeks to maximise their respective profit, and we are looking for a Nash equilibrium to this non-zero-sum game in the sense of the following definition. 

\begin{definition}\label{DefNE} A pair $(D^*, \Gamma^*) \in \mathcal A_1 \times \mathcal A_2$ is a Nash equilibrium (NE) if
\begin{align*}
J_1 (x,p, D^*, \Gamma^*) &\geq J_1(x,p,D, \Gamma^*) \\
J_2 (x,p,  D^*, \Gamma^*) &\geq J_2(x,p, D^*, \Gamma) 
\end{align*}
for any pair  $(D, \Gamma)  \in \mathcal A_1 \times \mathcal A_2$.
\end{definition}

\begin{remark}
Note that it is a consequence of the game set-up that Player~1 has precedence {over} Player~2 in the sense 
that if a lump sum $dD_t>0$ is paid out at the same time $t=\hat\gamma$ as Player~2 stops, then 
Player~1 receives the lump sum, whereas Player~2 receives the reduced amount $Y_{t}-D_t$.
Consequently, since Player~1 may choose a strategy with $D_0 = x$, for any Nash equilibrium $(D^*, \Gamma^*) \in \mathcal A_1 \times \mathcal A_2$ we must have
\[J_1(x,p,D^*,\Gamma^*)\geq \sup_{D\in\mathcal A_1}J_1(x,p,D,\Gamma^*)\geq x.\]
\end{remark}

\begin{proposition}\label{alt}
For a given pair $(D,\Gamma)\in\mathcal A_1\times\mathcal A_2$, we have
\[J_1(x,p,D,\Gamma) = \mathbb E \left [ \int_{0}^{\tau_0 } e^{-rt} (1-p\Gamma_{t-}) dD_t \right],\]
where $\Gamma_t:=\Gamma_t(Y-D)$.
\end{proposition}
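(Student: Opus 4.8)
The plan is to reduce the computation of $J_1$ to a conditional expectation given the controller's terminal information $\mathcal F^W_\infty:=\sigma(W_s,\,s\geq 0)$, exploiting that the only randomness entering $\hat\gamma$ beyond $W$ comes from $\theta$ and $U$, both of which are independent of $W$. First I would rewrite the Lebesgue--Stieltjes integral as
\[\int_{[0,\tau_0\wedge\hat\gamma]}e^{-rt}\,dD_t=\int_{[0,\tau_0]}e^{-rt}\,\ind_{\{t\leq\hat\gamma\}}\,dD_t,\]
which is legitimate because $\tau_0=\tau_0^X$ is $\mathcal F^W_\infty$-measurable (as $D$, hence $X=Y-D$, is $\bF^W$-adapted). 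Since the integrand is non-negative and $dD$ is a non-negative random measure determined by $W$, I would then invoke a conditional Tonelli argument to pull $\E[\,\cdot\mid\mathcal F^W_\infty]$ inside the integral, obtaining
\[J_1(x,p,D,\Gamma)=\E\left[\int_{[0,\tau_0]}e^{-rt}\,\P\big(t\leq\hat\gamma\mid\mathcal F^W_\infty\big)\,dD_t\right].\]

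Next I would evaluate the conditional survival probability. Splitting on the value of $\theta$ and using $\hat\gamma=\infty$ on $\{\theta=0\}$ gives
\[\P\big(t\leq\hat\gamma\mid\mathcal F^W_\infty\big)=\P\big(\theta=0\mid\mathcal F^W_\infty\big)+\P\big(\theta=1,\ t\leq\gamma\mid\mathcal F^W_\infty\big).\]
Because $\theta$ and $U$ are independent of each other and of $W$, the first term equals $1-p$ and the second factorizes as $p\,\P(t\leq\gamma\mid\mathcal F^W_\infty)$. The crux is the identification of the event $\{t\leq\gamma\}$ in terms of $\Gamma$ and $U$: from $\gamma=\inf\{s\geq 0:\Gamma_s>U\}$ and the monotonicity of $s\mapsto\Gamma_s$, the event $\{\gamma<t\}$ coincides with $\{\exists\,s<t:\Gamma_s>U\}=\{\sup_{s<t}\Gamma_s>U\}=\{\Gamma_{t-}>U\}$, so that $\{t\leq\gamma\}=\{\Gamma_{t-}\leq U\}$. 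Since $\Gamma_{t-}\in[0,1]$ is $\mathcal F^W_\infty$-measurable and $U$ is uniform on $(0,1)$ and independent of $\mathcal F^W_\infty$, the freezing/independence lemma yields $\P(t\leq\gamma\mid\mathcal F^W_\infty)=1-\Gamma_{t-}$, whence $\P(t\leq\hat\gamma\mid\mathcal F^W_\infty)=(1-p)+p(1-\Gamma_{t-})=1-p\Gamma_{t-}$. Substituting this back and taking the outer expectation gives the claimed identity.

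I expect the main subtlety to be the appearance of the left limit $\Gamma_{t-}$ rather than $\Gamma_t$: it arises precisely because the infimum defining $\gamma$ ranges over the open set $\{s<t\}$ and $\Gamma$ is non-decreasing, so $\sup_{s<t}\Gamma_s=\Gamma_{t-}$. This is consistent with the precedence of Player~1 recorded in the preceding remark, since at a jump time of $\Gamma$ the controller retains the full weight $1-p\Gamma_{t-}$ on a lump payment. The second, more routine, point is rigorously justifying the interchange of the conditional expectation with the pathwise Lebesgue--Stieltjes integral; this I would handle by a conditional version of Tonelli's theorem, valid here by non-negativity of both the integrand and the random integrator $dD$, first verifying the factorization $\E[f(W)\,g(\theta,U)\mid\mathcal F^W_\infty]=f(W)\,\E[g(\theta,U)]$ on product integrands and then extending by a monotone class argument if a fully rigorous treatment is desired.
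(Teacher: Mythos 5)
Your proof is correct and follows essentially the same route as the paper's: rewrite the integral with the indicator $\ind_{\{t\leq\hat\gamma\}}$, replace it by a conditional survival probability, and identify that probability as $1-p\Gamma_{t-}$ using the independence of $\theta$, $U$ and $W$ together with the monotonicity of $\Gamma$. The only (harmless, and arguably cleaner) differences are that you condition on $\mathcal F^W_\infty$ instead of $\mathcal F^W_t$, which makes the Tonelli interchange immediate, and that you establish the exact identity $\{t\leq\gamma\}=\{\Gamma_{t-}\leq U\}$ where the paper settles for the two-sided inclusion $\{\Gamma_{t-}<U\}\subseteq\{t\leq\gamma\}\subseteq\{\Gamma_{t-}\leq U\}$.
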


\begin{proof}
By conditioning we have
\begin{eqnarray*}
J_1(x,p,D,\Gamma) &=& \mathbb E \left [ \int_{0}^{\tau_0 \wedge \hat \gamma} e^{-rt} dD_t \right]= \mathbb E \left [ \int_{0}^{\tau_0 } e^{-rt} \ind_{\{t\leq \hat\gamma\}} dD_t \right]\\
&=& \mathbb E \left [ \int_{0}^{\tau_0 } e^{-rt} \P(t\leq \hat\gamma\vert \F^W_t) dD_t \right] .
\end{eqnarray*}
Since
$$\{\Gamma_{t-}< U\}\subseteq \{t\leq \gamma\}\subseteq \{\Gamma_{t-}\leq  U\},
$$
we have that 
\[\P(t\leq \hat\gamma\vert \F^W_t) = 1-p + p \P(t\leq \gamma\vert \F^W_t) = 
1-p\Gamma_{t-},\]
so 
\[J_1(x,p,D,\Gamma)= 
\mathbb E \left [ \int_{0}^{\tau_0 } e^{-rt} (1-p\Gamma_{t-}) dD_t \right].\]
\end{proof}
\begin{remark}
	Notice that for Player 2 we have chosen to maximise his expected payoff when he is active, i.e., when $\theta=1$. Alternatively, one could set Player 2 to maximise
	$$\hat{J}_2(x,p,D,\Gamma) := \mathbb E \left [\theta e^{-r (\tau_0\wedge\hat{\gamma})} X_{\tau_0\wedge\hat{\gamma}}\right ].$$
	The formulations for $J_2$ and $\hat{J}_2$ have the following interpretations. Imagine that before the game starts, at time $t=0-$, neither player knows $\theta$ and that the value of $\theta$ will be revealed to Player 2 at time $t=0$. Then, $\hat{J}_2$ is the expected payoff for Player 2 at time $t=0-$, whereas $J_2$ is the expected payoff at time $t=0$ when $\theta=1$. These games are referred to as the \textit{ex-ante} version of the game and the \textit{interim} version of the game, respectively (see \cite{aumann1995repeated, harsanyi1967games} for classical theory of games under incomplete information). Also notice that the two formulations are equivalent as by independence one obtains $\hat{J}_2(x,p,D,\Gamma)=pJ_2(x,p,D,\Gamma)$ and so the second inequality in Definition \ref{DefNE}  can be equivalently replaced by $\hat{J}_2 (x,p, D^*, \Gamma^*) \geq \hat{J}_2(x,p, D^*, \Gamma)$ for $p> 0$.
\end{remark}

\section{Background material and heuristics}\label{background}

\subsection{The single-player {de Finetti problem}}
Note that if $p=0$, then Player~1 acts under no competition and thus faces the standard {de Finetti} problem for which the value function
\begin{equation}\label{V}
V(x) := \sup_{D \in \mathcal A_1} \mathbb E \left[ \int_{0}^{\tau_0} e^{-rt} d D_t\right]
\end{equation}
and the optimal strategy $\tilde D$ are well known (see, e.g., \cite{jeanblanc1995optimization}). 
To describe this solution in more detail, let $\psi$ be the unique increasing solution of
\begin{equation*}
\cL \psi(x) =0, \quad x\geq 0,  
\end{equation*}
with $ \psi(0)=0$ and $ \psi'(0)=1,$
where $\cL$ denotes the differential operator
\begin{equation}\label{eq:L}
\cL:=\frac{\sigma^2}{2}\partial^2_x+\mu\partial_x-r.
\end{equation}
More explicitly, 
\begin{equation}
\label{eq:psi}\psi(x)= \frac{e^{\zeta_2 x}-e^{\zeta_1 x}}{\zeta_2-\zeta_1},
\end{equation}
where $\zeta_i$, $i=1,2$ are the solutions of the quadratic equation
\[\zeta^2+ \frac{2\mu}{\sigma^2}\zeta -\frac{2r}{\sigma^2}=0\]
with $\zeta_1<0<\zeta_2$.
Setting 
\begin{equation} \label{eq:B}
B:=\frac{\ln (\zeta_1^2) -  \ln(\zeta_2^2)}{\zeta_2-\zeta_1},
\end{equation}
we have that $\psi$ is concave on $[0,B]$ and convex on $(B,\infty)$, and 
\begin{align}
V(x) &= \begin{cases}
\frac{\psi(x)}{\psi'(B)}, & x\leq B, \\
x-B+V(B), & x>B.
\end{cases}\label{eq:Vtilde} 
\end{align}
Moreover,
\begin{equation}
\label{Dsingle}
\tilde D_t = \sup_{s\in[0,t]}\big(Y_s -B\big)^+
\end{equation}
is an optimal strategy in \eqref{V}, i.e.,
\[V(x)=\mathbb E \left[ \int_{0}^{\tilde{\tau}_0} e^{-rt} d \tilde D_t\right],\]
where $\tilde{X}:=X^{\tilde{D}}$ and $\tilde{\tau}_0:=\tau_0^{\tilde{X}}$. We also remark that $(\tilde{X}, \tilde D)$ is the solution of a Skorokhod reflection problem with reflection at the barrier $B$.

\subsection{Adjusted beliefs}\label{adjbelief}
We now return to our version of the game including  a ghost feature as described in Section~\ref{setup}. At the beginning of the game, from the perspective of Player 1 there is active competition (i.e., $\theta=1$) with probability $p$. As time passes, and if no stopping occurs, Player 1's conditional probability of competition $\Pi$ will decrease. 
More precisely, at time $t\geq0$, assuming that the strategy pair $(D,\Gamma)\in\mathcal A_1\times\mathcal A_2$ is played, 
we have 
\begin{eqnarray}\label{Pi}
\notag
\Pi_t =\Pi_t^{\Gamma}&:=& \P(\theta=1 |\mathcal \cF_t^W, \hat \gamma > t ) = \frac{\P(\theta=1 ,\hat \gamma > t | \mathcal F_t^W )}{\P(\hat \gamma>t| \mathcal F_t^W)} \\
&=&\frac{p \bP(\gamma > t | \mathcal F_t^W )}{(1-p) + p \P(\gamma>t| \mathcal F_t^W)}= \frac	{p (1-\Gamma_t(X^D))}{1-  p \Gamma_t(X^D)} 
\end{eqnarray}
since
$
\P(\gamma > t | \mathcal F_t^W ) = 1- \P(U \leq \Gamma_t | \mathcal F_t^W )  = 1- \Gamma_t
$ {for $\Gamma=\Gamma(X^D)$}. 
Moreover, since the initial probability of the event $\{\theta=1\}$ is $p$, we also have $\Pi_{0-}:=p$. 
Also note that solving for $\Gamma_t$ in the equation above gives
\begin{equation} \label{Pib}
\Gamma_t =\Gamma_t^\Pi= \frac{p-\Pi_t}{p(1-\Pi_t)},
\end{equation}
so there is a bijection between $\Pi$ and $\Gamma$.

\subsection{Heuristics}
\label{sec:heuristics}

Since 
\[J_1(x,p,D,\Gamma)=\mathbb E \left [ \int_{0}^{\tau_{0} } e^{-rt} (1-p\Gamma_{t-}) d D_t \right]
\leq \mathbb E \left [ \int_{0}^{\tau_0 } e^{-rt}  d D_t \right]\leq V(x)\]
for any strategy pair $(D,\Gamma)\in\mathcal A_1\times\mathcal A_2$,
it is clear that the risk of competition decreases the value from the perspective of Player~1.
On the other hand, to obtain a lower bound, let $\tilde D$ denote the optimal control of the single-player de Finetti problem, see 
\eqref{Dsingle}. Then,
\[J_1(x,p,\tilde D,\Gamma)= \mathbb E \left [ \int_{0}^{\tilde{\tau}_0 } e^{-rt} (1-p\Gamma_{t-}) d\tilde D_t \right]\geq (1-p)\mathbb E \left [ \int_{0}^{\tilde{\tau}_0 } e^{-rt}  d\tilde D_t \right]
= (1-p)V(x)\]
for any $\Gamma\in\mathcal A_2$. It is thus clear that 
\begin{equation}
\label{bound}
(1-p) V(x) \leq J_1(x,p, D^*, \Gamma^*) \leq  V(x)
\end{equation}
if $( D^*, \Gamma^*) \in \mathcal A_1\times \mathcal A_2$ is a Nash equilibrium.

In this section we will provide heuristic arguments to obtain a candidate Nash equilibrium
$(D^*,\Gamma^*)\in \mathcal A_1\times \mathcal A_2$.
To do that, we make the Ansatz that 
\begin{itemize}
\item[(a)]
there exists a non-increasing continuous boundary $p=c(x)$ such that the overall effect of the equilibrium strategy 
$( D^*, \Gamma^*) \in \mathcal A_1\times \mathcal A_2$ amounts to reflection of the two-dimensional process $(X^*,\Pi^*)=(Y-D^*,\Pi^{\Gamma^*})$ along this boundary;
\item[(b)]
the corresponding  equilibrium value $v$ of Player~1 satisfies
\begin{equation}\label{Ansatz}
v(x,p)=(1-p) V(x), \qquad \text{for } p\leq c(x).
\end{equation} 
\end{itemize}
Note that by the bijection between $\Gamma$ and $\Pi$ we have that $\Pi^*=\Pi^*(X^D)$ for every $D\in\cA_1$ and to obtain the reflection of $(X^*,\Pi^*)$ along the monotone boundary $c$ we need that
\begin{equation}\label{AnsatzPi}
\Pi^*_t=\Pi^*_t(X^D)=p\wedge c\Big(\sup_{0\leq s\leq t} X^D_s \Big), \qquad \text{for } t\geq 0.
\end{equation}
With a slight abuse of notation, $\Pi^*$ will be used to indicate both $\Pi^*(X^D)$ and $\Pi^*(X^*)$ but this will be clear from the context as it will depend on whether Player 1 plays an arbitrary admissible strategy $D\in\cA_1$ or the equilibrium strategy $D^*$.

Notice also that the Ansatz \eqref{Ansatz} coincides with the lower bound in \eqref{bound}, and 
is thus of the same type as the equilibrium obtained in the ghost Dynkin game studied in \cite{EkstromDeAngelis}.

Given this Ansatz, we further need to determine 
\begin{itemize}
\item[(i)] 
the boundary $c$;
\item[(ii)]
the direction of reflection when the process $( X^*,\Pi^*)$ is at the boundary;
\item[(iii)]
the strategy pair $(D^*,\Gamma^*)$ corresponding to the reflected process $( X^*,\Pi^*)$.
\item[(iv)]
the strategy for starting points $(x,p)$ with $p>c(x)$;
\end{itemize}
We do this below, and then the candidate Nash equilibrium that we produce is verified in Section~\ref{verification}. Notice that we will not discuss item (iv) here as it is not relevant at this stage, but it will be considered in Theorem \ref{thm:verification}.

First, let us consider a starting point $(x,p)\in[0,\infty)\times(0,1)$ with $p\leq c(x)$,
and recall that we expect in equilibrium that
\[( X^*_t, \Pi^*_t)=\left(Y_t- D^*_t,p\wedge c\Big(\sup_{0\leq s\leq t}(Y_s- D^*_s) \Big)\right),\]
for $D^*\in\mathcal A_1$ to be specified. Since $c$ is assumed to be continuous and non-increasing, we see that 
\begin{equation}
\label{ineq}
p\wedge c\Big(\sup_{0\leq s\leq t}(Y_s- D_s) \Big)\leq c(Y_t-D_t)
\end{equation}
for any choice $D\in\mathcal A_1$. By construction, $\Pi^\ast$ is continuous and we have
\[\Gamma^*_t = \frac{p-\Pi^*_t}{p(1-\Pi^*_t)}\]
and 
\begin{equation}\label{adj}
d \Pi^*_t = -\frac{1}{1-\Gamma^*_t}\Pi^*_t(1-\Pi^*_t) d \Gamma^*_t
\end{equation}
on $\{t\geq 0:\Gamma^*_t<1\}$, 
cf. \eqref{Pi} and \eqref{Pib}.

Note that by the dynamic programming principle one would expect that the process $M=M^D$ given by
$$
M_t := \int_{0}^{t\wedge \hat \gamma^*} e^{-rs} dD_s + e^{-rt}v(X_t, \Pi^*_t)\ind_{\{t<\hat \gamma^*\}}
$$ 
is an $\mathbb F^{W,\hat \gamma^*}$-martingale if $D= D^*\in\mathcal A_1$ is an optimal response to $ \Gamma^* \in\cA_2$, and an
$\mathbb F^{W,\hat \gamma^*}$-supermartingale if $D\in\mathcal A_1$ is any admissible response.
Here, $\mathbb F^{W,\hat \gamma^*}=(\mathcal F^{W,\hat \gamma^*})_{0\leq t<\infty}$
is the smallest right-continuous filtration to which $W$ and $\ind_{\{\cdot \geq \hat\gamma^*\}}$ are adapted, augmented with the $\mathbb P$-null sets of $\Omega$. 
Moreover, by conditioning (cf. Proposition~\ref{alt}), $M$ is an $\mathbb F^{W,\hat \gamma^*}$-(super)martingale if and only if 
\[\hat M_t:= \int_{0}^t e^{-rs}(1-p\Gamma^*_{s-})\,dD_s + e^{-rt}(1-p\Gamma^*_t)v(X_t,\Pi^*_t)\]
is an $\mathbb F^{W}$-(super)martingale.

Thus, by an application of Ito's formula, we see that when Player~2 plays the equilibrium strategy $\Gamma^*$ and $(X^*,\Pi^*)$ is at the boundary we need that
\[(1-v_x)\,dD^*_t-\frac{ \Pi^*_t}{1- \Gamma^*_t}\big((1- \Pi^*_t)v_p\ + v\big)\,d \Gamma^*_t=0 \quad\quad \mbox{(optimality)};\]
{whereas, when Player 2 plays the equilibrium strategy $\Gamma^*$ and Player 1 plays any admissible strategy $D\in\cA_1$, we need that}
\[(1-v_x)\,dD_t-\frac{\Pi^*_t}{1- \Gamma^*_t}\big((1-\Pi^*_t)v_p\ + v\big)\,d \Gamma^*_t\leq 0 \quad\quad \mbox{(suboptimality)},\]
{We stress that $\Pi^*$ here stands for $\Pi^*(X^*)$ in the optimality condition and $\Pi^*(X^D)$ in the suboptimality condition.} Note that we obtain from \eqref{Ansatz} and \eqref{ineq} that
$$
(1-p)v_p(x,p)+v(x,p)=0
$$
{when $p\leq c(x)$}.
Thus, to satisfy the optimality condition we need to have $v_x(x,p)=1$ at the boundary, and consequently the boundary $p=c(x)$ should be defined by 
\[(1-c(x))V'(x)=1\]
for $x\in[0,B]$ where $B$ is as specified in \eqref{eq:B}.
Hence, for $x\in[0,B]$ we have
\begin{equation}\label{Def:c}
c(x)=\frac{V'(x)-1}{V'(x)},
\end{equation}
from which it follows immediately that $c(B)=0$, $c'(x)<0$, and $c'(x) \to 0$ as $x \nearrow B$ by \eqref{eq:Vtilde}.
Let $\hat{p}:=(V'(0)-1)/V'(0)$. Then $c:[0,B]\to[0,\hat{p}]$ is a continuous strictly decreasing bijection and
we denote its inverse by $b:[0,\hat{p}]\to [0,B]$. From here on, we will refer to $b$ (instead of $c$) as the boundary 
when it is more convenient to do so. By convention, we also extend $b$ and $c$ by continuity and define $b(p)=0$ for every $p\in(\hat{p},1]$, and $c(x)=0$ for $x\in(B,\infty)$.

Moreover, notice that since $\Pi^*_t\leq c(X^D_t)$, for every admissible strategy $D\in\cA_1$, we also have that
$$v_x(X^D_t,\Pi^*_t)=(1-\Pi^*_t) {V}'(X^D_t)\geq (1-c(X^D_t)){V}'(X^D_t)=1,$$
so that the suboptimality condition is verified as well.

Since Player 2 in equilibrium only stops at time points when $(X^*,\Pi^*)$ is at the boundary, 
we expect his equilibrium value $u$ to be of the form $u(x,p)=g(p)\psi(x)$, for some function $g$, and to satisfy the condition $u(b(p),p)=b(p)$. Consequently, 
\begin{equation}
\label{u}
u(x,p)= b(p)\frac{\psi(x)}{\psi(b(p)) }
\end{equation}
for $x\leq b(p)$. Furthermore, by the indifference principle for equilibria in randomised strategies,
the process
$$
N_t =  e^{-rt} u(X^*_t, \Pi^*_t)
$$
should be a martingale when Player 1 plays the equilibrium strategy $D^*$. After applying Ito's formula this yields
\begin{equation}
\label{reflectiondirection}
-u_x\,d D^*_t+ u_p\,d \Pi^*_t=0
\end{equation}
on the boundary, so the reflection direction of $(X^*,\Pi^*)$ needs to be $(u_p, -u_x)$. 

We now show how to construct the candidate Nash equilibrium 
$(D^*, \Gamma^*)$ so that the corresponding process $({X^{*}}, \Pi^*)$ reflects along the boundary $c$ in the direction $(u_p, -u_x)$.
To do that, we first specify $\Gamma^*$ by setting {
$$\Gamma^*_t (X^D)= \frac{p-\Pi^*_t}{p(1-\Pi^*_t)}, \quad \text{for } t\geq 0,$$
(cf.~\eqref{Pib}), where $\Pi^*_t=\Pi^*_t(X^D)=p\wedge c(\sup_{0\leq s\leq t}(X^D_s))$ for an arbitrary strategy $D\in\mathcal A_1$.} 
The process {$(X^D,\Pi^*)$} then reflects at the boundary $c$ but the direction of
reflection is, for an arbitrary strategy $D\in\mathcal A_1$, not necessarily equal to $(u_p,-u_x)$.

One should only push in $X=Y-D$ when the process is at its current maximum 
(after the first time it hits the boundary). Therefore, one would expect to choose $ D^*$ so as to satisfy
$$
d D^*_t=\lambda(\bar X^*_t)\,d\bar X^*_t,
$$
where $\bar X^*_t:=b(p)\vee\sup_{0\leq s\leq t} X^*_s$ and $ X^*=Y-D^*$, for some function $\lambda$ to be determined. 
Moreover, from \eqref{AnsatzPi} we have that, when Player 1 plays the equilibrium strategy $D^*$, $ \Pi^*_t=c(\bar X^*_t)$, so \eqref{reflectiondirection}
gives 
\begin{equation}\label{eq:lambda}
\lambda(x)=\frac{c'(x)u_p(x,c(x))}{u_x(x,c(x))}.
\end{equation}
Using \eqref{u}, we then get
\[u_x(x,c(x))= \frac{\psi'(x)}{\psi(x)}x\]
and
\[u_p(x,c(x))=\frac{\psi(x)-x\psi'(x)}{\psi(x)c'(x)},\]
so
\begin{equation}
\label{lambda}
\lambda(x)=\frac{\psi(x)-x\psi'(x)}{x\psi'(x) }.
\end{equation}
and since $\psi$ is concave on $[0,B]$, we have $\lambda\geq 0$ on $(0,B]$.

In the next section we study in detail the solvability of the equation 
$$
X^*_t=Y_t-\int_0^t\lambda(\bar X^*_s)\,d\bar X^*_s
$$
using the notion of {\em perturbed Brownian motion}, which will allow us to obtain the equilibrium strategy $D^*$ for Player 1.

\section{A perturbed Brownian motion with drift}
\label{perturbed}

To construct the equilibrium strategy $D^*$ for Player 1 we will use the notion of perturbed Brownian motion.
Here we provide what is needed for the study of our problem, and refer to \cite{CPY}, \cite{PW} and the references therein 
for further details on such processes. First, define $\Lambda:[b(p),B]\to[0,\infty)$ by
\begin{align}
\label{Lambda}
\Lambda(x)&:=\int_{b(p)}^x\lambda(y)\,dy,
\end{align}
where 
$$
\lambda(x)=\frac{\psi(x)}{x\psi'(x)}-1
$$
as in \eqref{lambda}. Since $\lambda\geq 0$ on $(0,B]$, we note that
$\Lambda$ is increasing. Note also that $\lambda(x)$ is a bounded function {for $x\in[0,B]$} so $\Lambda$ is well-defined. For $x\leq b(p)$ we now consider the equation
\begin{equation}
\label{Xperturbed}
X_t=Y_t- \Lambda(\bar{X}_t), \quad t\in[0,\tau_B],
\end{equation}
where $Y_t=x + \mu t + \sigma W_t $, $\bar{X}_t := b(p) \vee \sup_{0\leq s\leq t} X_s$, and $\tau_B=\tau^X_B:=\inf\{t\geq 0: X_t\geq B \}$.
The process $X$ is then a perturbed Brownian motion with drift. 

To construct a solution of \eqref{Xperturbed}, let 
\begin{equation}\label{eq:Y*}
\bar{Y}_t:=b(p)\vee \sup_{0\leq s\leq t}Y_s.
\end{equation}
Define the function $f:[b(p),\infty)\to[b(p),B]$ by the relations
\begin{align}
\Lambda(f(y))+f(y)&=y, \quad y\in[b(p),\Lambda(B)+B],\label{eq:f}\\
f(y)&=B, \quad y>\Lambda(B)+B, \nonumber
\end{align}
i.e., $f$ is the inverse of the {increasing} function $x\mapsto y:=\Lambda(x) + x$ for $y\in[b(p),\Lambda(B)+B]$ and then extended constantly for $y>\Lambda(B)+B$. Now define
\begin{equation}\label{X}
X_t:=   Y_t-\bar{Y}_t +f(\bar{Y}_t).
\end{equation}
\begin{proposition}\label{Prop:PBm}
Assume that $x\leq b(p)$.
Then the process $X$ in \eqref{X} solves equation \eqref{Xperturbed}.
\end{proposition}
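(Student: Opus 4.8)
The plan is to verify the fixed-point identity \eqref{Xperturbed} by reducing it to the single relation $\bar{X}_t = f(\bar{Y}_t)$, i.e.\ that the running maximum of the candidate process $X$ in \eqref{X} equals $f$ evaluated at the running maximum of $Y$. Once this is in hand the conclusion is immediate: by the defining relation \eqref{eq:f} one has $\Lambda(f(y)) = y - f(y)$, so $\Lambda(\bar{X}_t) = \Lambda(f(\bar{Y}_t)) = \bar{Y}_t - f(\bar{Y}_t)$, and substituting into \eqref{X} gives $X_t = Y_t - \bar{Y}_t + f(\bar{Y}_t) = Y_t - \Lambda(\bar{X}_t)$, which is exactly \eqref{Xperturbed}.

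First I would record the elementary properties of $f$. Since $\Lambda$ is continuous and increasing (as observed after \eqref{Lambda}), the map $x \mapsto \Lambda(x)+x$ is a continuous strictly increasing bijection from $[b(p),B]$ onto $[b(p),\Lambda(B)+B]$, so its inverse $f$ is continuous and non-decreasing on $[b(p),\infty)$ with range contained in $[b(p),B]$; in particular $f(b(p)) = b(p)$, because $\Lambda(b(p))=0$. It follows that $X$ in \eqref{X} is continuous, being a sum and composition of continuous functions of the continuous paths $Y$ and $\bar{Y}$, and that $t \mapsto f(\bar{Y}_t)$ is continuous and non-decreasing.

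The core step is the identity $\bar{X}_t = f(\bar{Y}_t)$, which I would prove by matching upper and lower bounds. For the upper bound, for every $s \le t$ we have $Y_s \le \bar{Y}_s$, hence $X_s = Y_s - \bar{Y}_s + f(\bar{Y}_s) \le f(\bar{Y}_s) \le f(\bar{Y}_t)$ by monotonicity of $f$ and $\bar{Y}$; taking the supremum over $s \le t$ and using $f(\bar{Y}_t) \ge f(b(p)) = b(p)$ yields $\bar{X}_t \le f(\bar{Y}_t)$. For the matching lower bound I would evaluate $X$ at a time where $Y$ attains its running maximum. If $\sup_{0\le s\le t} Y_s \le b(p)$, then $\bar{Y}_s = b(p)$ for all $s \le t$, so $X_s = Y_s$ and $\bar{X}_t = b(p) = f(\bar{Y}_t)$. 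Otherwise, choosing $s^\ast \in [0,t]$ with $Y_{s^\ast} = \sup_{0\le s\le t} Y_s > b(p)$, one checks $\bar{Y}_{s^\ast} = Y_{s^\ast} = \bar{Y}_t$, whence $X_{s^\ast} = f(\bar{Y}_{s^\ast}) = f(\bar{Y}_t)$ and therefore $\bar{X}_t \ge f(\bar{Y}_t)$.

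Finally I would address the domain of validity $t \in [0,\tau_B]$ and the flat extension of $f$ beyond $\Lambda(B)+B$. On $[0,\tau_B]$ one has $\bar{X}_t \le B$, so $\bar{X}_t = f(\bar{Y}_t)$ lies in the range $[b(p),B]$ on which $\Lambda$ is defined and \eqref{eq:f} applies. For $t < \tau_B$ continuity of $X$ on the compact interval forces $\bar{X}_t < B$, hence $\bar{Y}_t < \Lambda(B)+B$ and $f$ acts as the genuine inverse; at the endpoint $t = \tau_B$ the value $\bar{X}_{\tau_B} = B$ together with continuity and the first-hitting property pin down $\bar{Y}_{\tau_B} = \Lambda(B)+B$, so the inverse relation still holds there. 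The main obstacle I anticipate is precisely this endpoint bookkeeping together with the careful justification of $\bar{X}_t = f(\bar{Y}_t)$; the latter rests on the geometric observation that the candidate $X$ can only reach a new maximum at the instants when $Y$ does, which is what makes the perturbed-motion construction close up into a genuine solution of \eqref{Xperturbed}.
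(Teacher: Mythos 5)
Your proof is correct and follows essentially the same route as the paper's: both hinge on the identity $\bar{X}_t=f(\bar{Y}_t)$ and then unwind the defining relation \eqref{eq:f} of $f$. The only difference is that you supply the sup/inf bookkeeping (upper and lower bounds for $\bar X_t$, and the endpoint analysis at $\tau_B$) that the paper's proof asserts in a single line, which is a welcome amount of extra detail rather than a new idea.
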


\begin{proof}
Let $t\in[0,\tau_B]$. Since $\bar{X}_t:=b(p)\vee\sup_{s\in[0,t]}X_s$ we obtain, from \eqref{X}, that $\bar{X}_t = f(\bar{Y}_t)$ as $f(b(p))=b(p)$. Consequently $\tau_B=\inf\{t\geq 0: Y_t\geq \Lambda(B)+B \}$ and so, by \eqref{eq:f}, we have
$f(\bar{Y}_t)-\bar{Y}_t= -\Lambda(f(\bar{Y}_t))$. This leads to 
\[X_t=Y_t-\Lambda(\bar{X}_t),\]
which proves the claim.
\end{proof}

\begin{remark}
The set-up in \eqref{Xperturbed} of a perturbed Brownian motion is slightly more general than what is used in 
most literature on perturbed Brownian motions; in fact, the typical choice of perturbation used in the literature is linear,
corresponding to a linear function $\Lambda$ in \eqref{Xperturbed}. On the other hand, we only deal with one-sided perturbation, in which case the solution can be constructed explicitly as in \eqref{X} above. It is straightforward to check that the argument for 
pathwise uniqueness of solutions of \eqref{Xperturbed}, cf. \cite[Proposition 2.1]{CPY}, carries over to our setting.
\end{remark}

\begin{remark}\label{Rmk:f}
	The function $f$ defined in \eqref{eq:f} is constructed in such a way that the process $X_t=Y_t-\bar{Y}_t+f(\bar{Y}_t)$ is a perturbed Brownian motion with drift for $t\in[0,\tau_B]$ (as proved in Proposition \ref{Prop:PBm}) and it is the Skorokhod reflection of the process $Y_t$ at the barrier $B$ for $t\in(\tau_B,\infty)$. Indeed, for $t\in(\tau_B,\infty)$, we have
	\begin{align}\label{Xreflect}
	X_t&=Y_t-\bar{Y}_t+f(\bar{Y}_t)=Y_t-\bar{Y}_t+B\nonumber\\
	&=Y_t-\sup_{s\in[0,t]}(Y_s-B)=Y_t-\sup_{s\in[0,t]}(Y_s-B)^+,
	\end{align}
	i.e., we have $X_t=X^{\tilde{D}}_t$ for $t\in(\tau_B,\infty)$ where $\tilde{D}$ is defined as in \eqref{Dsingle}.
\end{remark}

\section{Main result}
\label{verification}

In this section, we state and prove our main result: an explicit Nash equilibrium for our game. To do that, let us fix $(x,p)\in[0,\infty)\times[0,1]$ and recall that $Y$ is given by
\[Y_t=x+ \mu t+\sigma W_t.\]
First, define a new process $ Y^\wedge$ by 
\[ Y^\wedge_t:=x\wedge b(p)+ \mu t+\sigma W_t= Y_t-(x-b(p))^+,\]
so that $Y^\wedge$ starts below the boundary $b(p)$.
Then define $\bar Y^\wedge$ as in \eqref{eq:Y*} but with $Y^\wedge$ instead of $Y$, i.e.,
\[\bar Y^\wedge_t:=b(p)\vee \sup_{0\leq s\leq t} Y^\wedge_s.\]
Also, recall the definitions of $\Lambda:[b(p),B]\to[0,\infty)$ in \eqref{Lambda} and $f:[b(p),\infty)\to [b(p),B]$ in \eqref{eq:f}, and define $ D^*\in\mathcal A_1$ by $D^*_{0-}=0$ and
\begin{equation}
\label{D*}
D^*_t:= (x-b(p))^+ +\bar Y^\wedge_t -f(\bar Y^\wedge_t), \quad t\geq 0.
\end{equation}
Setting 
\[X^{\ast}_t:= Y_t-D^*_t,\]
Proposition~\ref{Prop:PBm} applied with $Y^\wedge$ in place of $Y$ yields
\begin{equation}\label{X^D*}
X^{\ast}_t=Y^\wedge_t-\bar{Y}^\wedge_t+f(\bar{Y}^\wedge_t)=Y^\wedge_t-\Lambda(\bar{X}^{\ast}_t), \qquad t\in[0,\tau^{\ast}_B],
\end{equation}
where $\tau^\ast_B=\tau_B^{X^{\ast}}:=\inf\{t\geq 0: X^{\ast}_t\geq B \}$. Note that by construction we have $dD^\ast_t = \Lambda (X_t^\ast) d\bar X_t^\ast$ for $t \in (0,\tau_B]$.

Moreover, for a given path $X=X^D\in\mathcal D$ (with $D \in \mathcal A_1$), define $Z^*=Z^*(X)$ by $Z^*_{0-}:=p$ and 
\begin{equation}\label{eq:Z}
Z_t^*:=p\wedge c\Big(\sup_{0\leq s\leq t}X_s\Big), \quad t\geq 0
\end{equation}
(cf. \eqref{AnsatzPi}),
and define $\Gamma^*\in\mathcal A_2$ by
\begin{equation}\label{Gamma*}
\Gamma^*_t(X):=\left\{\begin{array}{ll} 
\ind_{\{t\geq \tau_B\}}, & p=0,\\
\frac{p-Z^*_t}{p(1-Z^*_t)}, & p>0,
\end{array}
\right.
\end{equation}
where we recall that $\tau_B:=\inf\{t\geq 0:X_t\geq B\}$.

\begin{theorem}\label{thm:verification}
Let $(x,p)\in[0,\infty)\times [0,1]$. 
The pair $( D^*,\Gamma^*)$ defined above is a NE for the stochastic game \eqref{eq:valuefunctions}-\eqref{eq:valuefunctions2}, with equilibrium values
\begin{align*}
 J_1(x,p, D^*, \Gamma^*) &= v(x,p) :=
\left\{\begin{array}{ll} (1-p) V(x), & x\leq b(p),\\
(1-p)V(b(p)) + x-b(p), &x>b(p),\end{array}\right. \\
J_2(x,p, D^*, \Gamma^*) &= u(x,p) :=\left\{\begin{array}{ll} b(p)\frac{\psi(x)}{\psi(b(p)) }, &x\leq b(p),\\
b(p), &x>b(p),\end{array}\right.
\end{align*}
(with the understanding that $b(p)\psi(x)/\psi(b(p))=0$ for $x=0$ also when $b(p)=0$).
Here, $V$ is the value of the single-player de Finetti problem given in \eqref{eq:Vtilde}, and $\psi$ is given by \eqref{eq:psi}. 
\end{theorem}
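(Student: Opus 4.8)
The plan is to verify the two Nash inequalities of Definition~\ref{DefNE} separately by martingale arguments: Player~1's optimality (with $\Gamma^*$ frozen) is a singular-control problem, and Player~2's optimality (with $D^*$ frozen) is an optimal-stopping problem, and in each case I would exhibit a process that is a super/martingale giving the required comparison, with equality along the equilibrium strategy.

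For Player~1 I would start from the representation $J_1(x,p,D,\Gamma^*)=\E\big[\int_0^{\tau_0}e^{-rt}(1-p\Gamma^*_{t-})\,dD_t\big]$ of Proposition~\ref{alt} and study $\hat M_t=\int_0^t e^{-rs}(1-p\Gamma^*_{s-})\,dD_s+e^{-rt}(1-p\Gamma^*_t)v(X_t,\Pi^*_t)$, with $X=X^D$ and $\Pi^*=\Pi^*(X^D)$. The key algebraic simplification is $1-p\Gamma^*_t=(1-p)/(1-\Pi^*_t)$, and the running-maximum construction of $\Pi^*$ forces $\Pi^*_t\le c(X_t)$, i.e. $X_t\le b(\Pi^*_t)$, so that $v(X_t,\Pi^*_t)=(1-\Pi^*_t)V(X_t)$ throughout; hence the product collapses to $e^{-rt}(1-p)V(X_t)$ and the $\Pi^*$-dependence vanishes from the terminal term. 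Applying Itô's formula (Itô--Tanaka across the single kink of $V$ at $B$) reduces the finite-variation part of $\hat M$ to $(1-p)e^{-rt}(\cL V)(X_t)\,dt$ together with a $dD_t$-term of coefficient $(1-p)e^{-rt}\big(\tfrac{1}{1-\Pi^*_{t-}}-V'(X_t)\big)$. Since $\cL V\le 0$ everywhere and $V'(X_t)\ge 1/(1-c(X_t))=1/(1-\Pi^*_t)$ by \eqref{Def:c}, both are non-positive, so $\hat M$ is a supermartingale for every $D$; for $D=D^*$ the process $X^*$ stays in $[0,B]$ (where $\cL V=0$) and $D^*$ charges only times at which $X^*$ is at its running maximum, where $\Pi^*_t=c(X^*_t)$ and hence $V'(X^*_t)=1/(1-\Pi^*_t)$, making $\hat M$ a martingale. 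Optional stopping at $\tau_0\wedge T$, letting $T\to\infty$ and using $v(0,\cdot)=0$ with $X^*_{\tau_0}=0$, then yields $J_1(x,p,D,\Gamma^*)\le v(x,p)=J_1(x,p,D^*,\Gamma^*)$.

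For Player~2 I would fix $D^*$, so that $X^*$ and $\Pi^*=\Pi^*(X^*)$ form a fixed pair, and study $N_t=e^{-rt}u(X^*_t,\Pi^*_t)$. Two ingredients are needed. First, $u(x,p)\ge x$ for $x\le b(p)$, which follows from concavity of $\psi$ on $[0,B]$ with $\psi(0)=0$ (so $\psi(x)/x$ is non-increasing). Second, $N$ is a martingale: in the interior $\cL_x u=0$, while on the boundary the reflection identity \eqref{reflectiondirection}, $-u_x\,dD^*_t+u_p\,d\Pi^*_t=0$, annihilates the finite-variation part exactly when $X^*$ is at its running maximum. Writing $J_2=\E\big[e^{-r(\tau_0\wedge\gamma)}X^*_{\tau_0\wedge\gamma}\big]$, bounding the reward by $u$ at the stopping instant, and applying optional sampling (legitimate because $N$ is $\bF^W$-adapted and $U$ is independent, so $\gamma^{\Gamma}$ is a stopping time in the enlarged filtration in which $N$ remains a martingale) gives $J_2\le \E[N_{\tau_0\wedge\gamma}]=N_0=u(x,p)$; equality for $\Gamma^*$ holds because $\Gamma^*$ only charges times at which $X^*_t=b(\Pi^*_t)$, where $u(X^*_t,\Pi^*_t)=b(\Pi^*_t)=X^*_t$, so reward and value coincide.

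Finally I would dispose of the boundary regime $x>b(p)$, where $D^*$ opens with the lump $(x-b(p))^+$ charged at $\Gamma^*_{0-}=0$ (hence at full weight~$1$), bringing the state to $(b(p),p)$ and accounting for the extra terms $x-b(p)$ in $v$ and the value $b(p)$ in $u$, together with the degenerate cases $p\in\{0,1\}$ (for $p=0$ the game is precisely the single-player de Finetti problem). The main obstacle I anticipate is analytic bookkeeping rather than the algebra: justifying Itô across the kink of $V$ at $B$, controlling the possible jumps of a general singular $D$ in both the $dD_t$-term and the running-maximum term, and establishing the integrability/transversality needed to pass to $T\to\infty$ and to invoke optional sampling for the randomized stopping time $\gamma$ in the two best-response arguments.
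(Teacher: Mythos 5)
Your proposal is correct and follows essentially the same verification route as the paper: for Player~1 the same weighted process $\hat M$ built from Proposition~\ref{alt} (your identity $(1-p\Gamma^*_t)\,v(X_t,\Pi^*_t)=(1-p)V(X_t)$ is exactly the paper's cancellation $\cL\tilde v=0$ together with $(1-Z^*)\tilde v_p+\tilde v=0$, and the sign of the $dD$-coefficient comes from the same inequality $V'\geq 1/(1-\Pi^*)$ with equality on the boundary), and for Player~2 the same use of $e^{-rt}u(X^*,\Pi^*)$ with $u(x,p)\geq x$ by concavity of $\psi$, the randomization being handled by filtration enlargement rather than the paper's disintegration over $\rho=U$. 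Two harmless imprecisions: $V$ is $C^2$ at $B$ by smooth fit ($\psi''(B)=0$), so no It\^o--Tanaka term arises; and $e^{-rt}u(X^*_t,\Pi^*_t)$ is only a supermartingale after $\tau^*_B$ (there $D^*$ keeps pushing at $B$ while $\Pi^*\equiv 0$), which still yields the required upper bound since Player~2 maximizes and $\Gamma^*$ never stops after $\tau^*_B$ --- this is precisely why the paper truncates at $\tau^*_B$ and notes that later stopping is suboptimal.
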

\begin{proof}
\textit{Step 1}. We first prove that $D^*$ is an optimal response to $\Gamma^*$. Let $D\in\mathcal A_1$ be an arbitrary strategy for Player 1 and set $X:=Y-D$. Let $Z^*$ be defined as in \eqref{eq:Z} and $ \Gamma^*_t:= \Gamma^*_t(X)$ as in \eqref{Gamma*} accordingly .

If $p=0$, then $\theta=0$ a.s.~and so
$$J_1(x,0,D,\Gamma^*)=\bE\bigg[\int_{0}^{\tau_0}e^{-rt}dD_t\bigg].$$
Namely, the optimization problem for Player 1 degenerates into the single-player de Finetti problem, and $D^*$ coincides with its optimal solution $\tilde{D}$, as highlighted in Remark~\ref{Rmk:f}. Hence, also $ v(x,0)=J_1(x,0,D^*,\Gamma^*)\geq J_1(x,0,D,\Gamma^*)$ for every $D\in\cA_1$.

If $x=0$, then $J_1(0,p,D,\Gamma^*)=0$ for every $p\in[0,1]$, $D\in\cA_1$ and so, in particular, $v(0,p)=J_1(0,p,D^*,\Gamma^*)$ for every $p\in[0,1]$.

Now let $p\in(0,1]$ and let us first consider $0<x\leq b(p)$ (note that this implies that $p\in(0,\hat{p})$ as $b(p)=0$ for every $p\in[\hat{p},1]$). By \eqref{Gamma*}, we have 
$$Z^*_t=\frac{p(1-\Gamma^*_t)}{1-p \Gamma^*_t},\quad t\geq 0.$$
Since $Z^*$ and $\Gamma^*$ are continuous and of finite variation, 
we obtain
$$
dZ^*_t=-\frac{p(1-Z^*_{t})}{1-p\Gamma^*_{t}}\,d \Gamma^*_t,\quad t\geq 0.
$$

Now define
$$\tilde{v}(x,p):=(1-p)V(x)\in C^2([0,\infty)\times[0,1]).$$

By setting $\tau:=\tau_0\wedge T$ with $T\geq 0$ and applying Ito's formula to $e^{-rt}(1-p\Gamma^*_{t}) \tilde{v}(X_t,Z^*_t)$, we have that
\begin{align}\label{ItoD*}
e^{-r\tau}(1-p\Gamma^*_\tau) \tilde{v}(X_\tau, Z^*_\tau)&= \tilde{v}(x,p)+\int_0^\tau e^{-rt}(1-p \Gamma^*_{t})\cL \tilde{v}(X_{t-},Z^*_{t})\,d t\nonumber\\
&\hspace{12pt}-\int_0^\tau e^{-rt}(1-p\Gamma^*_{t})\tilde{v}_x(X_{t-},Z^*_{t})\,d  D^c_t\nonumber\\
&\hspace{12pt}+\int_0^\tau \sigma e^{-rt}(1-p \Gamma^*_{t})  \tilde{v}_x(X_{t-},Z^*_{t})\,d W_t\nonumber\\
&\hspace{12pt}-\int_0^\tau e^{-rt}p\big[(1-Z^*_{t}) \tilde{v}_p(X_{t-},Z^*_{t})+ \tilde{v}(X_{t-},Z^*_{t})\big]d\Gamma^{*}_t\nonumber\\
&\hspace{12pt}+\sum_{0\leq t \leq \tau} e^{-rt}(1-p \Gamma^*_{t})\big( \tilde{v}(X_t,Z^*_t)- \tilde{v}(X_{t-},Z^*_{t})\big),
\end{align}
where $\cL$ is defined as in \eqref{eq:L} and $D^c$ denotes the continuous part of $D$. Notice that $\tilde{v}(x,p)=v(x,p)$ {for $x\leq b(p)$} and that by definition of $\tilde{v}$, we have for every $t>0$
$$\cL \tilde{v}(X_{t-},Z^*_{t})= 0 \quad \text{and} \quad (1-Z^*_{t}) \tilde{v}_p(X_{t-},Z^*_{t})+ \tilde{v}(X_{t-},Z^*_{t})=0.$$
Hence, equation \eqref{ItoD*} becomes
\begin{align}\label{ItoD*2}
v(x,p)&= e^{-r\tau}(1-p\Gamma^*_\tau) \tilde{v}(X_\tau, Z^*_\tau)+\int_0^\tau e^{-rt}(1-p\Gamma^*_{t})  \tilde{v}_x(X_{t-},Z^*_{t})\,d D^c_t\nonumber\\
&\hspace{12pt}-\int_0^\tau \sigma e^{-rt}(1-p\Gamma^*_{t})\tilde{v}_x(X_{t-},Z^*_{t})\,d W_t\nonumber\\
&\hspace{12pt}-\sum_{0\leq t \leq \tau} e^{-rt}(1-p\Gamma^*_t)\big( \tilde{v}(X_t,Z^*_t)-\tilde{v}(X_{t-},Z^*_{t})\big).
\end{align}
For the summation term we have by the mean value theorem that
\begin{equation}\label{eq:Jumps}
\sum_{0\leq t \leq \tau} e^{-rt}(1-p\Gamma^*_t)\big( \tilde{v}(X_t,Z^*_t)-\tilde{v}(X_{t-},Z^*_{t})\big)=-\sum_{0\leq t \leq \tau} e^{-rt}(1-p\Gamma^*_{t}) \tilde{v}_x(\xi_t,Z^*_{t})\Delta D_t
\end{equation}
where $\xi_t\in(X_{t-},X_{t})$ and $\Delta D_t:=D_t-D_{t-}$.
By plugging \eqref{eq:Jumps} into \eqref{ItoD*2}, and using that $ \tilde{v}\geq 0$ and $ \tilde{v}_x\geq 1$, we obtain
\begin{equation}\label{ItoD*4}
 v(x,p)\geq \int_{0}^\tau e^{-rt}(1-p\Gamma^*_{t}) \, dD_t-\int_0^\tau \sigma e^{-rt}(1-p\Gamma^*_{t}) \tilde{v}_x(X_{t-},Z^*_{t})\, d W_t.
\end{equation}
Let
\begin{equation}\label{eq:defA}
\mathcal O := \{(x,p)\in[0,\infty)\times[0,1]:x\leq b(p) \}\cup((B,\infty)\times\{0\})
\end{equation}
and note that $(X_{t-},Z^*_t)\in \mathcal O$ for every $t\geq 0$ (by construction of $Z_t$) and that $\tilde{v}_x$ is bounded on $\mathcal O$ ($ \tilde{v}_x(x,p)=1$ for $(x,p)\in(B,\infty)\times\{0\}$). Thus, the stochastic integral above is a martingale and by an application of the optional sampling theorem we have that
$$ \tilde{v}(x,p)\geq \bE\bigg[\int_{0}^{\tau_0\wedge T} e^{-rt}(1-p \Gamma^*_{t})\,d D_t \bigg].$$
Letting $T\to\infty$ yields, by the monotone convergence theorem,
\[v(x,p)\geq \bE\bigg[\int_{0}^{\tau_0} e^{-rt}(1-p \Gamma^*_{t})\,d D_t \bigg]=\bE\bigg[\int_{0}^{\tau_0} e^{-rt}(1-p \Gamma^*_{t-})\,d D_t \bigg]=J_1(x,p,D, \Gamma^*)\]
for every $D\in\cD$, where the last equality follows by Proposition~\ref{alt}.

Now notice that $D^*_t$ defined in \eqref{D*} is continuous for every $t\geq0$, when $x\leq b(p)$, and that the same holds for $X^*_t:=X^{ D^*}_t$. 
Let $\tau^*_0:=\tau^{X^*}_0$, then equation \eqref{ItoD*2} for $D=D^*$ and $\tau^*:=\tau^{*}_0\wedge T$ becomes
\begin{eqnarray*}
v(x,p) &=& e^{-r\tau^*}(1-p\Gamma^*_{\tau^*}) \tilde{v}(X^*_{\tau^*}, Z^*_{\tau^*})+ \int_0^{\tau^{*}} e^{-rt} (1-p \Gamma^*_{t})  \tilde{v}_x(X^{*}_{t},Z^*_{t})\,dD^{*}_t\\
&& -\int_0^{\tau^{*}} \sigma e^{-rt}(1-p\Gamma^*_{t})  \tilde{v}_x(X^{*}_{t},Z^*_{t})\,d W_t\\
&=&e^{-r\tau^*}(1-p\Gamma^*_{\tau^*}) \tilde{v}(X^*_{\tau^*}, Z^*_{\tau^*})+\int_{0}^{\tau^{*}} e^{-rt}(1-p\Gamma^*_{t})\,dD^*_t\\
&&-\int_0^{\tau^{*}} \sigma e^{-rt}(1-p\Gamma^*_{t}) v_x(X^{*}_{t},Z^*_{t})\,d W_t,
\end{eqnarray*}
where the last equality holds since $\tilde{v}_x(x,p)=1$ if $x\geq b(p)$ and $dD^*_t=0$ if $X^*_t< b(Z^*_t)$.
Hence, again by taking expected values, we obtain
\begin{eqnarray*}
v(x,p)&=& \bE\bigg[e^{-r(\tau^{*}_0\wedge T)}(1-p\Gamma^*_{\tau^{*}_0\wedge T}) \tilde{v}(X^*_{\tau^{*}_0\wedge T}, Z^*_{\tau^{*}_0\wedge T})+\int_{0}^{\tau^{*}_0\wedge T} e^{-rt}(1-p \Gamma^*_{t})\,d D^*_t \bigg]\\
&\to& \bE\bigg[\int_{0}^{\tau^{*}_0} e^{-rt}(1-p \Gamma^*_{t})\,d D^*_t \bigg]
\end{eqnarray*}
as $T\to\infty$ by dominated convergence (the first term tends to 0 since $\tilde{v}(X^*_{\tau^{*}_0}, Z^*_{\tau^{*}_0})=0$). Thus, we have proved that
$$
J_1(x,p, D^*, \Gamma^*)=v(x,p)\geq \sup_{D\in\mathcal A_1}J_1(x,p,D,\Gamma^*), \quad \forall \: (x,p)\in \mathcal O.
$$

Let us now consider $(x,p)\in ([0,\infty)\times[0,1])\setminus  \mathcal O=:\mathcal O^c$, i.e., $x>b(p)$ with $p\neq 0$. Then,
$$v(x,p)=v(b(p),p)+x-b(p)=J_1(b(p),p,D^*,\Gamma^*)+x-b(p)=J_1(x,p,D^*,\Gamma^*).$$
Thus, we are left to prove that also in this case 
$$J_1(x,p,D^*,\Gamma^*)\geq J_1(x,p,D,\Gamma^*), \quad \forall \: D\in\cA_1.$$
For $(x,p)\in \
\mathcal O^c$, let the admissible strategy $D\in\cA_1$ have an initial jump $\Delta D_0=x-y$ with either $b(p)\leq y\leq x$ or $0\leq y< b(p)$. In the former case, by definition \eqref{Gamma*} of $\Gamma^*$, we have that
$$J_1(x,p,D,\Gamma^*)=(1-\Gamma^*_0)J_1(b(q),q,D,\Gamma^*)+x-y = \frac{q(1-p)}{p}V(b(q))+x-y,$$
where $q:=c(y)\leq p$ (and hence $y=b(q)$). Since $V$ is concave with $V'(b(p))=1/(1-p)$, then
\begin{align*}
J_1(x,p,D,\Gamma^*)&\leq \frac{q(1-p)}{p}\Big(V(b(p))+\frac{y-b(p)}{1-p}\Big)+x-y\\
&=\frac q p \Big((1-p)V(b(p))+y-b(p)\Big)+x-y\\
&\leq (1-p)V(b(p))+x-b(p)=J_1(x,p,D^*,\Gamma^*).
\end{align*}
If instead $0\leq y< b(p)$, then by a similar argument
\begin{align*}
J_1(x,p,D,\Gamma^*)&=J_1(y,p,D,\Gamma^*)+x-y=(1-p)V(y)+x-y\\
&\leq (1-p)V(b(p))+x-b(p)=J_1(x,p,D^*,\Gamma^*).
\end{align*}
This concludes Step 1, i.e., shows that the strategy $D^*$ is an optimal response to $\Gamma^*$.

\textit{Step 2}. We now prove that $\Gamma^*$ is an optimal response to $D^*$. Recall that
$$u(x,p):=\left\{\begin{array}{ll} b(p)\frac{\psi(x)}{\psi(b(p)) }, & x\leq b(p),\\
b(p), &x>b(p),\end{array}\right.$$
set $X^*:=X^{D^*}$ with $D^*$ defined in \eqref{D*}, $ \tau^*_0:=\tau^{X^*}_0$, and let 
\begin{equation*}
Z^*_t:=p\wedge c\Big(\sup_{0\leq s\leq t} X^*_s\Big), \quad t\geq 0, \qquad Z^*_{0-}:=p,
\end{equation*}
as in \eqref{eq:Z} with $D=D^*$.

Let $p\in[0,1]$ and assume $x\leq b(p)$. If $p\in[\hat{p},1]$, then $b(p)=0$ and so $x=0$ and the strategy $\Gamma\in\cA_2$ is irrelevant since the game stops immediately. It hence suffices to check $p \in [0, \hat p)$.  For notational convenience we treat the case $p=0$ separately at the end and assume first $p \in (0,\hat p)$. Note that $X^*_t\leq b(Z^*_t)$ for every $t\geq 0$ and that $Z^*_t$, $D^*_t$ and $X^*_t$ are continuous for every $t\geq 0$. Define 
$$
\tilde{u}(x,p):=b(p)\frac{\psi(x)}{\psi(b(p))}\in C^2([0,\infty)\times(0,\hat{p})).
$$
and let $\tau$ be any $\mathbb F^W$-stopping time {s.t.~$\tau \leq \tau^\ast_B$ $a.s.$, where $\tau^\ast_B=\inf\{t\geq 0: X^\ast_t \geq B\}$. Define $\tau^*=\tau^*_{\varepsilon,T}:=\tau^*_0\wedge \tau^\ast_{B-\varepsilon}\wedge\tau\wedge T$ for $T,\varepsilon\geq 0$ arbitrary and note that $Z^\ast_t >0$ for $t \in [0,\tau^\ast)$.} By applying Ito's formula to $e^{-rt} \tilde{u}(X^*_t,Z^*_t)$ we obtain
\begin{align}
e^{-r\tau^*} \tilde{u}(X^*_{\tau^*},Z^*_{\tau^*}) &= \tilde{u}(x,p)+\int_0^{\tau^*} e^{-rs}\cL  \tilde{u}(X^*_s,Z^*_s)\,ds-\int_0^{\tau^*} e^{-rs} \tilde{u}_x(X^*_s,Z^*_t)\,d D^{*}_s\nonumber\\
&\hspace{12pt}+\int_0^{\tau^*} \sigma e^{-rs} \tilde{u}_x(X^*_{s},Z^*_{s})\,dW_s+\int_0^{\tau^*} e^{-rs} \tilde{u}_p(X^*_s,Z^*_s)\,dZ^{*}_s.\nonumber
\end{align}
By definition of $\tilde{u}$, we have that $\cL  \tilde{u}(X^*_s,Z^*_s)=0$ for every $0\leq s\leq \tau^*$
and by construction of $D^*$ and $Z^*$ (recall \eqref{X^D*}), we obtain
\begin{align}\label{ZD*ineq}
\int_0^{\tau^*}& e^{-rs} \tilde{u}_p(X^*_s,Z^*_s)\,dZ^{*}_s-\int_0^{\tau^*} e^{-rs}\tilde{u}_x(X^*_s,Z^*_s)\,dD^{*}_s \nonumber\\
&=\int_0^{\tau^*} e^{-rs}\Big(\tilde{u}_p(X^*_s,Z^*_s)c'(X^*_s)-\tilde{u}_x(X^*_s,Z^*_s)\lambda(X^*_s) \Big)\,d\bar{X}^{*}_s =0 \nonumber\\
\end{align}
where the last equality holds by definition of $\lambda$ in \eqref{eq:lambda}.  
Hence,
\begin{equation}
\label{ItoU}
e^{-r\tau^*} \tilde{u}(X^*_{\tau^*},Z^*_{\tau^*}) {=} \tilde{u}(x,p)+\int_0^{\tau^*} \sigma e^{-rs}  \tilde{u}_x(X^*_{s},Z^*_{s})\,dW_s.
\end{equation}
Since $\tilde{u}_x$ is bounded on $\{(x,p):x\leq b(p)\}$, the stochastic integral in \eqref{ItoU} is a martingale. Since $X^\ast$ and $Z^\ast$ are continuous, applying the optional sampling theorem and using dominated convergence yields
\begin{equation*}
\tilde{u}(x,p) = \bE\bigg[e^{-r\tau^*} \tilde{u}(X^*_{\tau^*},Z^*_{\tau^*})\bigg]\to\bE\bigg[e^{-r(\tau^*_0\wedge \tau)} \tilde{u}(X^*_{\tau^*_0\wedge \tau},Z^*_{\tau^*_0 \wedge \tau})\bigg],
\end{equation*}
as $T\to\infty$ and $\varepsilon \to 0$, so
\begin{equation}\label{ItoU1}
\tilde{u}(x,p) =\bE\bigg[e^{-r(\tau^*_0\wedge \tau)} \tilde{u}(X^*_{\tau^*_0\wedge \tau},Z^*_{\tau^*_0 \wedge \tau})\bigg]
\end{equation}
for any $\bF^W$-stopping time $\tau \leq \tau_B$ $a.s.$
Now, for any $\Gamma\in\mathcal A_2$, define the $\bF^W$-stopping times
$$
\gamma(\rho):=\inf\{t\geq 0:\Gamma_t( X^*)> \rho\},\quad\quad \rho\in[0,1),
$$
{and let $\gamma_B(\rho) := \gamma({\rho}) \wedge \tau^\ast_B \leq \tau^\ast_B$.}
Since $\tilde{u}=u$ on $\{(x,p):x\leq b(p)\}$, {equality \eqref{ItoU1} }for $\tau=\gamma_B(\rho)$ reads
$$
u(x,p)=\bE\bigg[e^{-r(\tau^*_0\wedge \gamma_B(\rho))} u( X^*_{\tau^*_0\wedge \gamma_B(\rho)},Z^*_{ \tau^*_0\wedge \gamma_B(\rho)})\bigg],\quad\quad \rho\in[0,1).
$$
Thus,
\begin{align}\label{ItoU2}
u(x,p)&=\int_0^1 \bE\bigg[e^{-r(\tau^*_0\wedge \gamma_B(\rho))} u( X^*_{\tau^*_0\wedge \gamma_B(\rho)},Z^*_{\tau^*_0\wedge \gamma_B(\rho)})\bigg]\,d\rho\\
&\geq \int_0^1\bE\bigg[e^{-r(\tau^*_0 \wedge \gamma_B(\rho))}X^*_{\tau^*_0\wedge \gamma_B(\rho)}\bigg]\,d\rho\nonumber
\end{align}
where the inequality holds because $\psi(x)$ is concave for $x\leq B$ with $\psi(0)=0$.

Last, we note that 
\begin{equation} \label{eq:equality}
e^{-r(\tau^*_0\wedge \gamma_B(\rho))}X^*_{\tau^*_0\wedge \gamma_B(\rho)} \geq e^{-r(\tau^*_0\wedge \gamma(\rho))}X^*_{\tau^*_0 \wedge \gamma(\rho)} \qquad  a.s.
\end{equation}
since $X_t^\ast \leq B$ for all $t >0$ and $r>0$ and thus
\begin{equation*}
u(x,p) \geq  \int_0^1 \mathbb E \bigg[e^{-r(\tau^*_0\wedge \gamma(\rho))}X^*_{\tau^*_0\wedge \gamma(\rho)} \bigg] d\rho = J(x,p, D^\ast, \Gamma).
\end{equation*}
{If $\Gamma= \Gamma^*$, then by \eqref{Gamma*} we have that $\gamma^*(\rho)\leq \tau^*_B$ for every $\rho\in[0,1)$, where
$$\gamma^*(\rho):=\inf\{t\geq 0:\Gamma^*_t( X^*)> \rho\},\quad\quad \rho\in[0,1)$$ and thus the inequality in \eqref{eq:equality} is an equality in this case. 
Moreover, $\Gamma_t^\ast$ only increases when $Z_t^\ast$ increases and $Z^\ast = Z_t^*:=p\wedge c\Big(\sup_{0\leq s\leq t}X_s\Big)$ so 
$$
u( X^*_{\tau^*_0 \wedge\gamma^\ast(\rho)},Z^*_{\tau^*_0 \wedge\gamma^\ast(\rho)}) = b(c(\bar X_{\tau^*_0 \wedge\gamma^\ast(\rho)}))  = X^\ast_{\tau^*_0 \wedge\gamma^\ast(\rho)}
$$
in \eqref{ItoU2}. Thus all the inequalities above become equalities and
\begin{equation}\label{OptimGamma^*}
u(x,p)= J_2(x,p, D^*,\Gamma^*).
\end{equation}
}

If $p=0$, we have $u(x,0)=\tilde u(x,0) =b(0) \frac{\psi(x)}{\psi(b(0))} = B \frac{\psi(x)}{\psi(B)}$ and $Z^\ast_t=0$ for all $t\geq 0$. Applying Ito's formula to $e^{-rt}u(X_t^\ast,0)$ between $0$ and $\tau_0 \wedge \tau \leq \tau
^\ast_B$ and using the properties of $\psi(x)$ gives
\begin{align*}
e^{-r(\tau_0 \wedge \tau)} \tilde u(X_{\tau_0 \wedge \tau}, 0) =\tilde u(x,0)   - \int_0^{\tau_0 \wedge \tau}  e^{-rs} \tilde u_x(X_s^\ast,0)dD^\ast_s + \int_0^{\tau_0 \wedge \tau}  e^{-rs} \sigma \tilde u_x(X^\ast_s,0) dW_s.
\end{align*}
Taking expected value and arguing as above thus gives
$$
u(x,0)= \mathbb E\bigg[e^{-r(\tau_0 \wedge \tau^\ast_B)} u(X^\ast_{\tau_0 \wedge \tau^\ast_B},0)\bigg] =  e^{-r(\tau_0 \wedge \tau^\ast_B)} X_{\tau_0 \wedge \tau^\ast_B} = J_2(x,0,D^\ast, \Gamma^\ast)
$$
and 
\begin{align*}
u(x,0) =& \int_0^1 \mathbb E\bigg[e^{-r(\tau_0 \wedge{\gamma_B(\rho)})} u(X_{\tau_0 \wedge\gamma_B(\rho)},0)\bigg]  d\rho \geq  \int_0^1 \mathbb E\bigg[e^{-r{(\tau_0 \wedge\gamma_B(\rho))}} X_{\tau_0 \wedge\gamma_B(\rho)}\bigg]d\rho \\ \geq& J_2(x,p,D^\ast, \Gamma)
\end{align*}
where we again have used convexity of $\psi$ and the fact that any stopping time $\gamma(\rho)>\tau^\ast_B$ yields a lower payoff that $\tau^\ast_B$.

The above treats the case $x\leq b(p)$ so let us finalize the proof by considering $x>b(p)$. We have, for every $\Gamma\in\cA_2$, that
$$u(x,p)=u(b(p),p)\geq J_2(b(p),p,D^*,\Gamma)=J_2(x,p,D^*,\Gamma),$$
where the last equality holds by the precedence of Player 1 over Player 2 and since $D^*_0=x-b(p)$ for $x>b(p)$. Similarly, we obtain
$$u(x,p)=u(b(p),p)= J_2(b(p),p,D^*,\Gamma^\ast)=J_2(x,p,D^*,\Gamma^\ast).$$ 

Hence, $ \Gamma^*$ is an optimal response to $D^*$. Together with Step 1, this implies that $(D^*,\Gamma^*)$ is a NE and that the equilibrium values are $v$ and $u$, respectively. This concludes the proof.
\end{proof}

\begin{remark} \label{remark:pi0}
It is a remarkable feature of the equilibrium stratgey $(D^\ast, \Gamma^\ast)$ that it allows the process $\Pi^\ast$ to reach $0$ in finite time, thereby completely ruling out the possibility that a fraudster exists if he did not stop the game yet. 
Indeed, let $x\leq b(p)$, then we have
$$
X^\ast_t=   Y_t-\bar{Y}_t +f(\bar{Y}_t)
$$
and thus $\bar X^\ast_t = f(\bar Y_t)$ where $f$ is an increasing bounded function such that $f(x) =B$ for all $x\geq \Lambda(B)+B$. Consequently, $\Pi^\ast_{t} = p \wedge c(\bar X^\ast_t) =p \wedge c(f(\bar Y_t))=c(B)=0$ for all 
$$t \geq  \tau_B=\inf \{s \geq 0: Y_s \geq \Lambda(B)+B \},$$ 
the first time the unrestricted Brownian motion (with drift) $Y$ reaches $\Lambda(B) +B$ (which is finite a.s.). 
\end{remark}

\section{A numerical example}
\label{num}

To provide the reader with further intuition, we conclude by looking at some numerical experiments. Throughout the section, we consider parameters $\mu=0.03$, $\sigma=0.12$, and $r=0.01$. The optimal strategy $\tilde D$ in the single-player {de Finetti} problem given by \eqref{Dsingle} then amounts to reflection at $B\approx 1.12$. 

Note that whereas the qualitative form of the single-player strategy {de Finetti} problem is fixed, the nature of the NE strategy for Player 1 varies depending on the value of $p\in[0,1]$. To be more precise, if Player 1 is certain that no fraudster exists, i.e., if $p=0$, then the problem degenerates into the standard single-player {de Finetti} problem and the optimal strategy is $\tilde D$ (and Player 2 would stop as soon as $X$ hits $B$). On the other hand, if Player 1 has sufficient evidence of the existence of a fraudster, i.e., if $p\in [\hat{p},1]$ where $\hat{p}:=(V'(0)-1)/V'(0)$, then the agent extracts the whole resource immediately and the game terminates at $t=0$. 
The most interesting scenario is when $p\in(0,\hat{p})$. In this case, the NE described in Theorem \ref{thm:verification} amounts to a {(possible)} initial lump sum extraction of size {$(x-b(p))^+$}, and then continuous 
extraction as to reflect the two-dimensional process $(X^\ast,\Pi^\ast)$ along the boundary $b$, with reflection
in the prescribed direction {$(u_p,-u_x)$}. Figures \ref{fig:path6390} and \ref{fig:processes6390} are derived with initial values $p_0= 0.8 \cdot \hat p \approx 0.72$ and $x_0 = \frac{b(p_0)}{2} \approx 0.13$, putting us in the last of the three cases above.  

Figure \ref{fig:boundary} shows the boundary $p\mapsto b(p)$ (or equivalently $x\mapsto c(x)$) together with the direction of reflection of the equilibrium process $(X^\ast, \Pi^\ast)$. Note that $b(0)=B$ and $b(\hat p)=0$. Figures \ref{fig:path6390} and \ref{fig:processes6390} show a simulated path of the equilibrium process $(X^\ast, \Pi^\ast)$ and the corresponding processes $\Pi^\ast$, $\Gamma^\ast$, and $D^\ast$, respectively. Flat portions of $\Gamma^\ast, \Pi^\ast$, and $D^\ast$ correspond to $X^\ast$ being strictly below the boundary $b(\Pi^\ast)$. Note also that in Figure \ref{fig:path6390}, the process $\Pi^\ast$ reaches $0$ in finite time, ruling out the existence of a fraudster playing the equilibrium strategy if he did not stop yet, see Remark \ref{remark:pi0}. 

\begin{figure}
\centering
\includegraphics[width=0.85\textwidth,height=0.85\textheight,keepaspectratio]{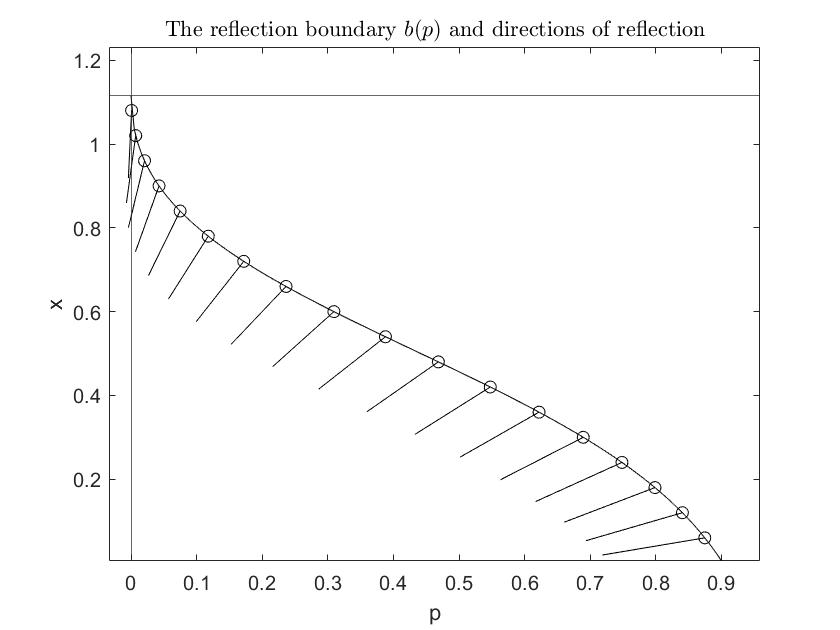}
\caption{The boundary $b(p)$ and the direction of reflection for the equilibrium process $(X^\ast, \Pi^\ast)$.
}
\label{fig:boundary}
\end{figure}

\begin{figure}
\centering
\includegraphics[width=0.85\textwidth,height=0.85\textheight,keepaspectratio]{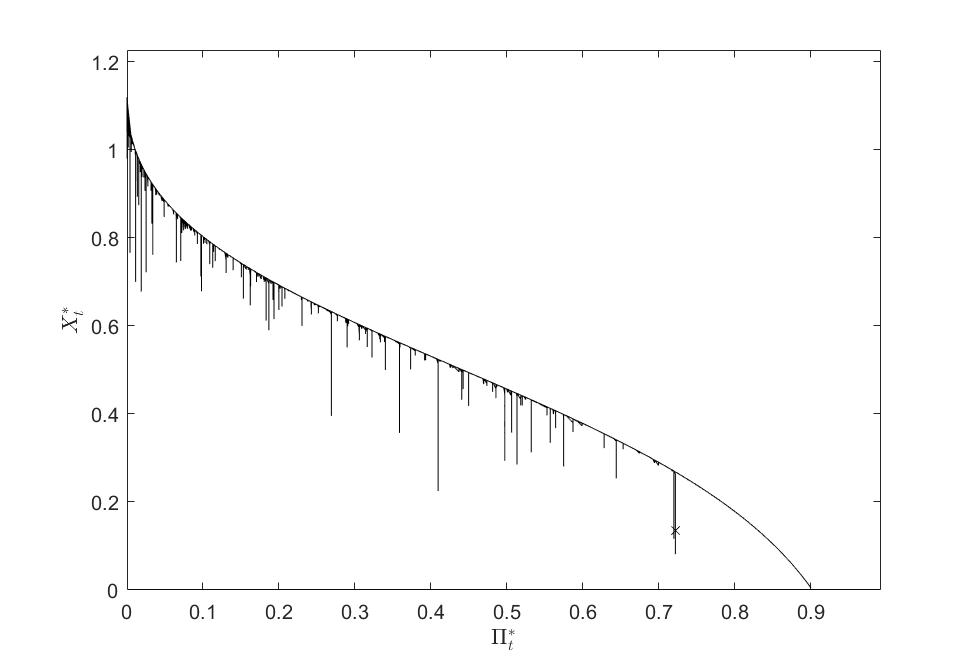}
\caption{A simulated path of $(\Pi^\ast, X^\ast)$ {reflected along the boundary $p\mapsto b(p)$}.}
\label{fig:path6390}
\end{figure}

\begin{figure}
\centering
\includegraphics[width=0.85\textwidth,height=0.85\textheight,keepaspectratio]{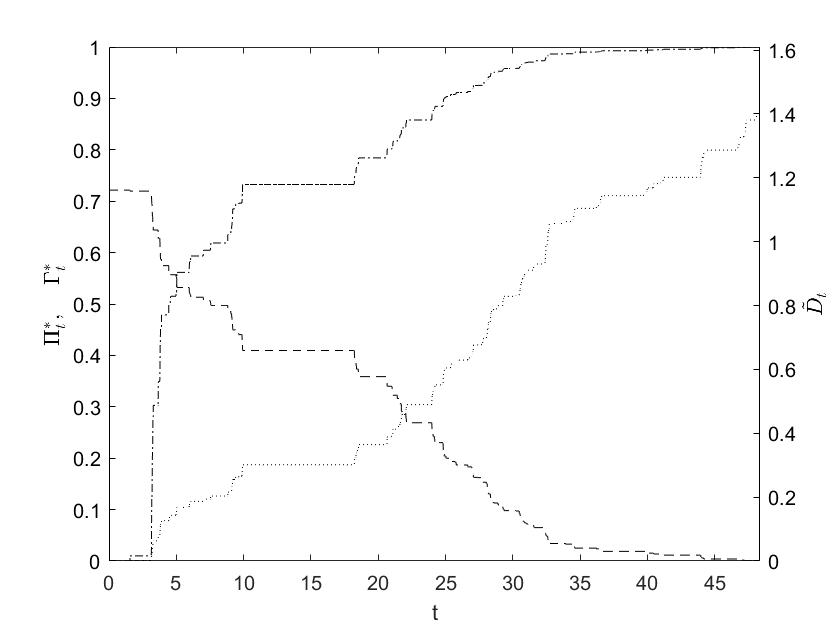}
\caption{Auxiliary processes $\Pi^\ast$ (dashed), $\Gamma^\ast$ (dash-dot), and $D^\ast$ (dotted).}
\label{fig:processes6390}
\end{figure}

 \newpage
\bibliography{bibfile}{}
\bibliographystyle{abbrv}

\end{document}